\newtheorem*{acknowledgement}{Acknowledgement}
\newtheorem{corollary}{Corollary}
\newtheorem{definition}{Definition}
\newtheorem{lemma}{Lemma}
\newtheorem{remark}{Remark}
\newtheorem{theorem}{Theorem}
\newtheorem{example}{Example}
\numberwithin{equation}{section}
\title[Conformal vector fields on Lie groups]{Conformal vector fields on Lie groups}
\author{Adriana Araujo Cintra, Zhiqi Chen and Benedito Leandro Neto}
\address{Universidade de Federal de Goi\'as, Regional Jata\'i, Endere\c co, BR 364, km 195, 3800, CEP 75801-615, Jata\'i, Goi\'as, Brazil.}
\email{adriana.cintra@ufg.br}
\address{School of Mathematical Sciences and LPMC, Nankai University, 300071 Tianjin, People's Republic of China}
\email{chenzhiqi@nankai.edu.cn}
\address{Universidade de Federal de Goi\'as, Regional Jata\'i, Endere\c co, BR 364, km 195, 3800, CEP 75801-615, Jata\'i, Goi\'as, Brazil.}
\email{bleandroneto@gmail.com}
\numberwithin{equation}{section}
\numberwithin{theorem}{section}
\keywords{Conformal vector Fields, Yamabe soliton, Lie groups.} \subjclass[2010]{53C25, 22E60}
\date{\today}
\begin{document}

\newcommand{\spacing}[1]{\renewcommand{\baselinestretch}{#1}\large\normalsize}
\spacing{1.2}

\begin{abstract}
In this paper, we investigated the behavior of left-invariant conformal vector fields on Lie groups with left-invariant pseudo-Riemannian metrics. First of all, we prove that conformal vector fields on pseudo-Riemannian unimodular Lie groups are Killing. Then we obtain a necessary condition for a pseudo-Rimennian non-unimodular Lie group admitting a non-Killing conformal vector field. Finally, we give examples of non-Killing conformal vector fields and Yamabe solitons on non-unimodular Lorentzian Lie groups based on the above study.
\end{abstract}

\maketitle

\section{Introduction}
A transformation of a Riemannian manifold is say to be conformal if it preserves the angle defined by the Riemannian metric. Furthermore, we said that a Riemannian manifold admits a conformal vector field $X$ if $$\mathfrak{L}_{X}g=2\rho g,$$ where $g$ is the Riemannian metric, $\mathfrak{L}_{X}g$ is the Lie derivative and $\rho$ is a smooth function. The existence of such a function might give some information about the topological structure of the Riemannian manifold \cite{Caminha,Obata}. An important class of conformal vector fields are {\it Killing vector fields}, i.e., vector fields such that $\mathfrak{L}_{X}g=0$. Killing vector fields are generators of isometries, which provides a close link between the geometry of a manifold $M$ and the algebra of $I(M)$, the set of all isometries in $M$ (see \cite{Oneil}). Yamabe soliton vector fields are another important class of conformal vector fields.

A {\it Yamabe soliton} is a complete manifold $M^{n}$ with a metric $\langle\cdot,\cdot\rangle$, a vector field $X\in\mathfrak{X}(M)$ and a constant $\lambda$ that satisfies the equation:
\begin{eqnarray}\label{yamabesoliton}
(R-\lambda)\langle\cdot,\cdot\rangle=\frac{1}{2}\mathfrak{L}_{X}\langle\cdot,\cdot\rangle,
\end{eqnarray}
where
\begin{eqnarray}\label{liederivative}
(\mathfrak{L}_{X}g)(Y,Z)=X\langle Y, Z\rangle-\langle [X,Y], Z\rangle - \langle Y, [X, Z]\rangle,
\end{eqnarray}
for any $X,Y,Z\in\mathfrak{X}(M)$ and $R$ is the scalar curvature of the metric $\langle\cdot,\cdot\rangle$. If $\lambda>0$, $\lambda=0$ or $\lambda<0$, then we have, respectively, a Yamabe soliton {\it shrinking}, {\it steady} and {\it expanding}. A Yamabe soliton is called {\it non-trivial} if it admits a soliton vector field which is not Killing. In particular, the scalar curvature is constant if the Yamabe soliton is trivial. If $X=\nabla f$ for some smooth potential function $f$ on $M$, we call it a {\it gradient Yamabe soliton} (see \cite{Leandro} for instance).

The Yamabe solitons are a special type of solution for the Yamabe flow
\begin{eqnarray*}
\frac{\partial}{\partial t}g(t)=-Rg(t).
\end{eqnarray*}
The Yamabe flow is introduced as an attempt to solve the Yamabe problem \cite{Hamilton}. The solitons have become an important tool to explore the geometric flows. For these reasons, Ricci solitons and Yamabe solitons have been extensively explored in the past years \cite{BarbosaRibeiro,Calvino,CC,CSZ,ChowKnopf,Nazareno,Jablonski,Shu}. It is worth to point out that $2$-dimensional Yamabe soliton and Ricci soliton are the same. The soliton solutions do not necessarily exist in the Lorentzian case (see \cite{Calvino}). In addition, generalizations of solitons were considered and investigated \cite{ChenLiangYi,Leandro}. Chen, Liang and Yi \cite{ChenLiangYi} considered a generalization of Ricci solitons on Lie groups. In fact, they proved that exist many non-trivial $m$-quasi Einstein metrics in a solvable quadratic Lie group. In \cite{Leandro}, Leandro Neto and Pina de Oliveira considered a generalization of gradient Yamabe solitons. They proved that connected generalized quasi Yamabe gradient solitons are, in fact, $m$-quasi Yamabe gradient solitons.

A Lie group is a set which has a structure of manifold and group at the same time, where the group operations are smooth. Of central importance is the relationship between the Lie group and its Lie algebra (see \cite{GallierQuaintance,Milnor,Oneil,Warner}). Lie groups are very useful to give examples. As Milnor \cite{Milnor} said, ``when studying relationships between curvature of a complete Riemannian manifold and other topological or geometric properties, it is useful to have many examples". On the other hand, any left-invariant conformal vector field on a Lie group with a left-invariant Riemannian metric is Killing.

This paper is to study left-invariant non-Killing conformal vector fields on Lie groups with left-invariant pseudo-Riemannian metrics. It is point out in \cite{Calvino} that there are 3-dimensional Lie groups with left-invariant Lorentzian metrics admitting conformal vector fields which are not left-invariant. Without special notes, a vector field in this paper means a left-invariant vector field. Firstly, we have the following theorem.

\begin{theorem}\label{theorem2}
Let $G$ be an $n$-dimensional unimodular Lie group with a left-invariant pseudo-Riemannian
metric $\langle\cdot,\cdot\rangle$ and let $\mathfrak{g}$ be the unimodular Lie algebra of left-invariant vector fields. If $X\in \mathfrak{g}$ is a conformal vector field, then $X$ is a Killing vector field.
\end{theorem}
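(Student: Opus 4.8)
The plan is to reduce the conformal condition to a purely algebraic statement on the Lie algebra $\mathfrak{g}$ and then extract a trace identity that collides with unimodularity. First I would exploit left-invariance: if $X,Y,Z\in\mathfrak{g}$ are all left-invariant and the metric is left-invariant, then the function $\langle Y,Z\rangle$ is \emph{constant} on $G$, so the first term in \eqref{liederivative} vanishes, i.e. $X\langle Y,Z\rangle=0$. Hence for left-invariant fields the Lie derivative collapses to
\begin{eqnarray*}
(\mathfrak{L}_{X}g)(Y,Z)=-\langle [X,Y],Z\rangle-\langle Y,[X,Z]\rangle,
\end{eqnarray*}
and the conformal hypothesis $\mathfrak{L}_{X}g=2\rho g$ becomes $-\langle [X,Y],Z\rangle-\langle Y,[X,Z]\rangle=2\rho\langle Y,Z\rangle$ for all $Y,Z\in\mathfrak{g}$.

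The next step is to argue that $\rho$ must be a \emph{constant}. Indeed, for fixed left-invariant $Y,Z$ the left-hand side above is a constant function on $G$, while the right-hand side is $2\rho\langle Y,Z\rangle$. Since the pseudo-Riemannian metric is nondegenerate, one can choose $Y,Z$ with $\langle Y,Z\rangle\neq 0$, and then $\rho=\tfrac{1}{2}(\text{constant})/\langle Y,Z\rangle$ is forced to be constant. With $\rho$ constant, I would then repackage the identity in operator form. Writing $\mathrm{ad}_{X}\colon\mathfrak{g}\to\mathfrak{g}$, $\mathrm{ad}_{X}(Y)=[X,Y]$, and letting $\mathrm{ad}_{X}^{*}$ denote the adjoint of $\mathrm{ad}_{X}$ with respect to $\langle\cdot,\cdot\rangle$, the equation reads
\begin{eqnarray*}
\mathrm{ad}_{X}+\mathrm{ad}_{X}^{*}=-2\rho\,\mathrm{Id}.
\end{eqnarray*}

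I would then simply take the trace of both sides. The key linear-algebra fact is that the trace of an operator equals the trace of its metric-adjoint, $\mathrm{tr}(\mathrm{ad}_{X}^{*})=\mathrm{tr}(\mathrm{ad}_{X})$, which holds for any nondegenerate symmetric bilinear form (in a basis with Gram matrix $g$ one has $\mathrm{ad}_{X}^{*}=g^{-1}(\mathrm{ad}_{X})^{\mathsf T}g$, conjugate to the transpose). This yields $2\,\mathrm{tr}(\mathrm{ad}_{X})=-2\rho\,n$, so $\mathrm{tr}(\mathrm{ad}_{X})=-\rho\,n$. Finally I would invoke unimodularity of $\mathfrak{g}$, which is precisely the condition $\mathrm{tr}(\mathrm{ad}_{Y})=0$ for every $Y\in\mathfrak{g}$; applied to $Y=X$ it gives $-\rho\,n=0$, hence $\rho=0$ (as $n\geq 1$). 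Therefore $\mathfrak{L}_{X}g=2\rho g=0$ and $X$ is Killing.

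The argument is short, and the two places deserving care—rather than a true obstacle—are establishing the constancy of $\rho$ from the nondegeneracy of the metric, and verifying $\mathrm{tr}(\mathrm{ad}_{X}^{*})=\mathrm{tr}(\mathrm{ad}_{X})$ in the \emph{indefinite} setting, where one might worry that the signature interferes; it does not, since the trace is a conjugation invariant. The whole proof hinges on recognizing that left-invariance converts the analytic conformal equation into the algebraic adjoint identity, after which unimodularity does the rest.
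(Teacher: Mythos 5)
Your proposal is correct and follows essentially the same route as the paper: reduce the conformal equation to the algebraic identity $\mathrm{ad}_{X}+\mathrm{ad}_{X}^{*}=-2\rho\,\mathrm{Id}$ and take the trace, using unimodularity to conclude $\rho=0$. The paper carries out the trace concretely by summing $\langle \mathrm{ad}_{X}\{e_i\},e_i\rangle$ with the signature signs over a pseudo-orthonormal basis, which is exactly your observation that $\mathrm{tr}(\mathrm{ad}_{X}^{*})=\mathrm{tr}(\mathrm{ad}_{X})$ in disguise.
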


Since every nilpotent and compact groups are unimodular, by Theorem \ref{theorem2}, every left-invariant conformal vector field on such Lie groups is Killing. Moreover, by Theorem \ref{theorem2},
\begin{corollary}\label{corollary1}
Let $G$ be a unimodular Lie group with a left-invariant pseudo-Riemannian metric $\langle\cdot,\cdot\rangle$ and let $\mathfrak{g}$ be the unimodular Lie algebra of left-invariant vector fields. If $X\in \mathfrak{g}$ is a vector field satisfying (\ref{yamabesoliton}), then $X$ is a Killing vector field. That is, left-invariant Yamabe solitons on unimodular Lie groups are trivial.
\end{corollary}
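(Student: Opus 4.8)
The plan is to recognize that the Yamabe soliton equation (\ref{yamabesoliton}) is nothing but a special instance of the defining equation for a conformal vector field, and then to invoke \thmref{theorem2} directly. Rewriting (\ref{yamabesoliton}) by moving the factor $\tfrac{1}{2}$ to the other side gives
\begin{eqnarray*}
\mathfrak{L}_{X}\langle\cdot,\cdot\rangle = 2(R-\lambda)\langle\cdot,\cdot\rangle,
\end{eqnarray*}
so that, formally, $X$ is a conformal vector field with conformal factor $\rho = R-\lambda$. The only thing to verify before applying \thmref{theorem2} is that $\rho$ is genuinely an admissible conformal factor, that is, a smooth function on $G$.

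First I would observe that because the metric $\langle\cdot,\cdot\rangle$ is left-invariant, every left translation $L_{a}$ is an isometry of $(G,\langle\cdot,\cdot\rangle)$. Scalar curvature is an isometry invariant, hence $R\circ L_{a}=R$ for all $a\in G$; since $G$ acts transitively on itself by left translations, $R$ must be constant. As $\lambda$ is constant by definition of a Yamabe soliton, the function $\rho=R-\lambda$ is constant, and in particular smooth.

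With this in hand the conclusion is immediate: $X$ satisfies $\mathfrak{L}_{X}\langle\cdot,\cdot\rangle=2\rho\langle\cdot,\cdot\rangle$ with $\rho$ a smooth function, so $X$ is a conformal vector field in the sense of the Introduction. Since $X\in\mathfrak{g}$ and $G$ is unimodular, \thmref{theorem2} forces $X$ to be Killing. By definition a Yamabe soliton is trivial precisely when its soliton vector field is Killing (and the scalar curvature is constant), both of which now hold, so the soliton is trivial. There is essentially no obstacle here beyond the left-invariance observation guaranteeing that $R$ is constant; the entire substance of the result is contained in \thmref{theorem2}, and the corollary follows formally once the Yamabe equation is read as a conformal one.
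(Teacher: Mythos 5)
Your proof is correct and follows essentially the same route as the paper: read the Yamabe soliton equation as the conformal equation with $\rho=R-\lambda$ and invoke Theorem~\ref{theorem2}. Your added observation that left-invariance forces $R$ (hence $\rho$) to be constant is a sound justification of the step the paper treats as immediate.
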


From \cite{Calvino}, we know that there are non-Killing conformal vector fields on $3$-dimensional non-unimodular Lie groups with lorentzian metrics. Here we have the following theorem.

\begin{theorem}\label{prop1}
Let $G$ be an $n$-dimensional non-unimodular Lie group with a left-invariant pseudo-Riemannian
metric $\langle\cdot,\cdot\rangle$ of signature $(p,q)$ and let $\mathfrak{g}$ be the non-unimodular Lie algebra of left-invariant vector fields. If $x\in \mathfrak g$ is a non-Killing conformal vector field, then $\dim C(\mathfrak g)\leq \min(p,q)$ and $\dim [\mathfrak g,\mathfrak g]\geq \dim \mathfrak g-\min(p,q)$.
\end{theorem}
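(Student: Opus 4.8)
The plan is to translate the conformal equation into a purely algebraic skew-symmetry condition on $\operatorname{ad}_x$, and then to exhibit two totally isotropic subspaces whose dimensions control $\dim C(\mathfrak g)$ and the codimension of $[\mathfrak g,\mathfrak g]$. First I would record the conformal equation for left-invariant data. Since $x$ and $\langle\cdot,\cdot\rangle$ are left-invariant, the tensor $\mathfrak L_x\langle\cdot,\cdot\rangle$ is again left-invariant, so the conformal factor $\rho$ in $\mathfrak L_x\langle\cdot,\cdot\rangle=2\rho\langle\cdot,\cdot\rangle$ is forced to be a constant, and it is nonzero precisely because $x$ is non-Killing. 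Evaluating (\ref{liederivative}) on left-invariant fields, where $x\langle y,z\rangle=0$, the equation reads
\[
\langle [x,y],z\rangle+\langle y,[x,z]\rangle=-2\rho\langle y,z\rangle\quad\text{for all }y,z\in\mathfrak g.
\]
Writing $A=\operatorname{ad}_x$, this says exactly that $A+\rho\,\mathrm{id}$ is skew-symmetric with respect to $\langle\cdot,\cdot\rangle$; equivalently the metric adjoint satisfies $A^{*}=-A-2\rho\,\mathrm{id}$. These two reformulations drive the rest of the argument.

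For the bound on the center I would note that every $z\in C(\mathfrak g)$ satisfies $Az=[x,z]=0$. Applying the skew-symmetry of $A+\rho\,\mathrm{id}$ to two central elements $z_1,z_2$ gives $\langle(A+\rho\,\mathrm{id})z_1,z_2\rangle+\langle z_1,(A+\rho\,\mathrm{id})z_2\rangle=2\rho\langle z_1,z_2\rangle=0$, and since $\rho\neq0$ this forces $\langle z_1,z_2\rangle=0$. Hence $C(\mathfrak g)$ is a totally isotropic subspace of a space carrying a form of signature $(p,q)$; as any such subspace has dimension at most $\min(p,q)$, this yields $\dim C(\mathfrak g)\le\min(p,q)$.

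For the derived algebra I would pass to the orthogonal complement $[\mathfrak g,\mathfrak g]^{\perp}$, whose dimension equals $\dim\mathfrak g-\dim[\mathfrak g,\mathfrak g]$ by non-degeneracy of the metric, so it suffices to show $\dim[\mathfrak g,\mathfrak g]^{\perp}\le\min(p,q)$. For $w\in[\mathfrak g,\mathfrak g]^{\perp}$ and any $z$ one has $\langle w,[x,z]\rangle=0$ because $[x,z]\in[\mathfrak g,\mathfrak g]$; thus $\langle A^{*}w,z\rangle=0$ for all $z$, and non-degeneracy gives $A^{*}w=0$. Combined with $A^{*}=-A-2\rho\,\mathrm{id}$, this means $Aw=-2\rho w$, so every element of $[\mathfrak g,\mathfrak g]^{\perp}$ is an $A$-eigenvector with eigenvalue $-2\rho$. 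Feeding two such vectors $w_1,w_2$ into the skew-symmetry of $A+\rho\,\mathrm{id}$ produces $-2\rho\langle w_1,w_2\rangle=0$, so $[\mathfrak g,\mathfrak g]^{\perp}$ is again totally isotropic, which gives the second inequality.

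The computations here are short, so the main step is conceptual rather than a genuine obstacle: the decision to test the skew-symmetry identity on exactly the two subspaces $C(\mathfrak g)$ and $[\mathfrak g,\mathfrak g]^{\perp}$, together with the observation that both consist of $A$-eigenvectors (eigenvalue $0$ on the center, eigenvalue $-2\rho$ on the complement). The only point requiring care is the eigenvalue computation on $[\mathfrak g,\mathfrak g]^{\perp}$, where one must pass through the metric adjoint $A^{*}$ and invoke non-degeneracy; once that is in place, the maximal-isotropic-dimension fact for signature $(p,q)$ closes both estimates simultaneously.
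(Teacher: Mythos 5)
Your proof is correct and follows essentially the same route as the paper: both arguments show that $C(\mathfrak g)$ and $[\mathfrak g,\mathfrak g]^{\perp}$ are totally isotropic for $\langle\cdot,\cdot\rangle$ (using $\rho\neq 0$) and then invoke the bound $\min(p,q)$ on the dimension of an isotropic subspace in signature $(p,q)$. The only cosmetic difference is in the second part, where you detour through the adjoint $A^{*}$ and the eigenvalue $-2\rho$ on $[\mathfrak g,\mathfrak g]^{\perp}$, whereas the paper observes directly that both terms $\langle [x,w_1],w_2\rangle$ and $\langle w_1,[x,w_2]\rangle$ vanish for $w_1,w_2\in[\mathfrak g,\mathfrak g]^{\perp}$.
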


Finally, we classify non-unimodular Lie groups with Lorentzian metrics admitting non-Killing conformal vector fields in dimensions $2$ and $3$, and then give examples in higher dimensions.

\section{Preliminaries}

\begin{definition}
A {\it Lie group} $G$ is a differentiable manifold which is also endowed with a group structure such that the map $G\times G\rightarrow G$ defined by $(\sigma, \tau) \mapsto \sigma\tau^{-1}$ is smooth.
\end{definition}

\begin{definition}
A {\it Lie algebra} $\mathfrak{g}$ is a vector space $L$ together with a bilinear operator $[\cdot,\cdot]:\mathfrak{g}\times \mathfrak{g}\rightarrow \mathfrak{g}$ (called the bracket) such that for all $X, Y, Z\in  \mathfrak{g}$,
\begin{enumerate}
\item[(1)] [X,Y]=-[Y,X]\quad\mbox{(skew-symmetric)},
\item[(2)] [[X,Y],Z]+[[Y,Z],X]+[[Z,X],Y]=0\quad\mbox{(Jacobi identity)}.
\end{enumerate}
\end{definition}

\begin{definition}
For any $\sigma\in G$, the left translation $l_{\sigma}$ is the diffeomorphism of $G$ defined by
\begin{eqnarray*}
l_{\sigma}(\tau)=\sigma\tau,
\end{eqnarray*}
for any $\tau\in G$. A vector field $X$ on $G$ is called {\it left-invariant} if $X$ is $l_{\sigma}$-related to itself for each $\sigma\in G$; that is,
\begin{eqnarray*}
dl_{\sigma}\circ X=X\circ l_{\sigma}.
\end{eqnarray*}
\end{definition}

\begin{definition}
A metric $\langle\cdot,\cdot\rangle$ on a Lie group $G$ is called left-invariant if
\begin{eqnarray*}
\langle X,Y\rangle_{\tau}=\langle (dl_{\sigma})_{\tau}(X),(dl_{\sigma})_{\tau}(Y)\rangle_{\sigma\tau},
\end{eqnarray*}
for all $\sigma,\tau\in G$ and $X, Y\in \mathfrak{g}$.
\end{definition}

Let $G$ be a Lie group with the Lie algebra $\mathfrak{g}$ of left-invariant vectors fields on $G$. For any $X\in \mathfrak{g}$, {\it the adjoint map} $ad_{X}: \mathfrak{g}\rightarrow \mathfrak{g}$ sends $Y\longmapsto [X,Y]$, i.e., $ad_{X}\{Y\}=[X,Y]$.

\begin{definition}\label{center}
The {\it center} $C(\mathfrak{g})$ of a Lie algebra $\mathfrak{g}$ is the set of all $X\in\mathfrak{g}$ such that $ad_{X}=0$.
\end{definition}

\begin{definition}\label{unimodular}
A Lie group $G$ is {\it unimodular} if $tr\{ad_{X}\}=0$ for any $X\in\mathfrak{g}$.
\end{definition}

\begin{definition}\label{ideal}
If $\mathfrak{g}$ is a Lie algebra, a subalgebra $\mathfrak{h}$ of $\mathfrak{g}$ is a (linear) subspace of $\mathfrak{g}$ such that $[u, v]\in\mathfrak{h}$, for all $u, v \in\mathfrak{h}$. If $\mathfrak{h}$ is a (linear) subspace of $\mathfrak{g}$ such that $[u, v]\in\mathfrak{h}$ for all
$u\in \mathfrak{h}$ and all $v\in \mathfrak{g}$, we say that $\mathfrak{h}$ is an ideal in $\mathfrak{g}$.
\end{definition}

\begin{definition}\label{commutatorideal}
Given two subsets $\mathfrak{a}$ and $\mathfrak{b}$ of a Lie algebra $\mathfrak{g}$, denote by $[\mathfrak{a},\mathfrak{b}]$ the subspace of $\mathfrak{g}$ consisting of all linear combinations $[a,b]$ for any $a\in\mathfrak{a}$ and $b\in\mathfrak{b}$. In particular, $[\mathfrak{g}, \mathfrak{g}]$ is an ideal in $\mathfrak{g}$ called the {\it commutator ideal} of $\mathfrak{g}$.
\end{definition}

Let $G$ be a Lie group with the Lie algebra $\mathfrak g$ and let $\langle\cdot,\cdot\rangle$ be a pseudo-Riemannian metric on $G$. Assume that $\nabla$ is the Levi-Civita connection associated with $\langle\cdot,\cdot\rangle$. Then,
\begin{eqnarray}\label{liebrackets}
[X,Y]=\nabla_{X}Y-\nabla_{Y}X.
\end{eqnarray}
Moreover if $\langle\cdot,\cdot\rangle$ is left-invariant on $G$ (see \cite{GallierQuaintance,Oneil}), then for left-invariant vector fields $X, Y$, we have
\begin{eqnarray*}
\langle X,Y\rangle_{\sigma}=\langle X(\sigma),Y(\sigma)\rangle_{\sigma}=\langle (dl_{\sigma})_{e}X(e),(dl_{\sigma})_{e}Y(e)\rangle_{\sigma}=\langle X(e),Y(e)\rangle_{e}=\langle X,Y\rangle_{e},
\end{eqnarray*}
where $e$ is the identity element of $G$. It shows that the map $\sigma\mapsto\langle X,Y\rangle_{\sigma}$ is constant. Thus, for any vector field $Z$ on $\mathfrak{g}$,
\begin{eqnarray*}\label{leftinvariantmetric0}
Z\langle X,Y\rangle=0.
\end{eqnarray*}
This means that, for a left-invariant metric $\langle\cdot,\cdot\rangle$ on $G$, we have
\begin{eqnarray}\label{leftinvariantmetric}
\langle \nabla_{Z}X, Y\rangle +\langle X, \nabla_{Z}Y\rangle=0,
\end{eqnarray}
for any $X,Y,Z\in \mathfrak{g}$.
By (\ref{liebrackets}) and (\ref{leftinvariantmetric}),
\begin{eqnarray*}\label{connection}
\langle\nabla_{X}Y,Z\rangle=\frac{1}{2}(\langle[X, Y], Z\rangle - \langle[Y, Z], X\rangle + \langle[Z, X], Y\rangle),
\end{eqnarray*}
where $X, Y, Z$ are all left-invariant vector fields. Assume that $X\in\mathfrak{g}$ is a conformal vector field, i.e.
\begin{eqnarray}\label{conformalvectorfield}
\mathfrak{L}_{X}\langle\cdot,\cdot\rangle=2\rho\langle\cdot,\cdot\rangle.
\end{eqnarray}
It follows that,
\begin{eqnarray*}
0=\mathfrak{L}_{X}\langle X,X\rangle=2\rho|X|^{2}.
\end{eqnarray*}
If $\langle\cdot,\cdot\rangle$ is a left-invariant Riemannian metric, then $\rho=0$ or $X\equiv0$. That is, $X$ is Killing or $X$ is trivial. For this reason, we focus on a left-invariant pseudo-Riemannian metric $\langle\cdot,\cdot\rangle$.
\begin{lemma}
Let notations be as above. If $X$ is a non-Killing conformal vector field, then $X$ is a lightlike vector field, i.e., $\langle X,X\rangle=0$.
\end{lemma}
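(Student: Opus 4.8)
The plan is to extract the desired conclusion directly from the conformal equation (\ref{conformalvectorfield}) by testing it against the pair $(X,X)$, exactly as foreshadowed in the displayed computation immediately preceding the statement. First I would substitute $Y=Z=X$ into the Lie-derivative formula (\ref{liederivative}) to obtain
\begin{eqnarray*}
(\mathfrak{L}_{X}\langle\cdot,\cdot\rangle)(X,X)=X\langle X,X\rangle-2\langle[X,X],X\rangle.
\end{eqnarray*}
The second term vanishes by skew-symmetry of the bracket, and the first vanishes because $\langle X,X\rangle$ is a constant function on $G$: this is precisely the left-invariance property recorded above, where it was shown that $Z\langle X,Y\rangle=0$ for left-invariant fields. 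Hence the left-hand side of the conformal equation evaluated at $(X,X)$ is zero, while its right-hand side is $2\rho\langle X,X\rangle=2\rho|X|^{2}$, giving the relation $0=2\rho|X|^{2}$.

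The remaining step is to argue that $\rho\neq 0$ whenever $X$ is non-Killing. I would first observe that $\rho$ is in fact a constant: evaluating (\ref{conformalvectorfield}) against any pair of left-invariant fields $Y,Z$ shows, by the same left-invariance argument, that $(\mathfrak{L}_{X}\langle\cdot,\cdot\rangle)(Y,Z)$ is a constant function, so $2\rho\langle Y,Z\rangle$ is constant; choosing $Y=Z$ with $\langle Y,Y\rangle\neq 0$ (such a vector exists because a nondegenerate symmetric form cannot be totally isotropic) forces $\rho$ to be constant. Then, if $\rho=0$, equation (\ref{conformalvectorfield}) reduces to $\mathfrak{L}_{X}\langle\cdot,\cdot\rangle=0$, i.e. $X$ is Killing, contrary to hypothesis. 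Therefore $\rho$ is a nonzero constant, and from $0=2\rho|X|^{2}$ we conclude $|X|^{2}=\langle X,X\rangle=0$, so $X$ is lightlike.

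This argument is essentially immediate, so I do not anticipate a genuine obstacle; the only point requiring care is justifying that $\rho$ is constant and nonzero rather than merely a function that might vanish on part of $G$. Since everything in sight is left-invariant and the norm $|X|^{2}$ is a single real number, the implication $\rho\neq 0\Rightarrow|X|^{2}=0$ is unambiguous, and no case analysis on the signature is needed beyond ensuring the existence of a non-null vector used to pin down the constancy of $\rho$.
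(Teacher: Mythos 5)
Your argument is correct and follows essentially the same route as the paper: both hinge on the single identity $0=\mathfrak{L}_{X}\langle X,X\rangle=2\rho\langle X,X\rangle$, the paper phrasing it contrapositively (if $\langle X,X\rangle\neq 0$ then $\rho=0$ and $X$ is Killing) while you argue directly that non-Killing forces $\rho\neq 0$ and hence $\langle X,X\rangle=0$. Your extra step establishing that $\rho$ is constant is harmless but not needed, since $\langle X,X\rangle$ is already a single real number and $2\rho(p)\langle X,X\rangle=0$ for all $p$ with $\langle X,X\rangle\neq 0$ would force $\rho\equiv 0$ directly.
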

\begin{proof}
Otherwise, assume that $X$ is spacelike $\langle X,X\rangle > 0$ or timelike $\langle X,X\rangle < 0$. By (\ref{conformalvectorfield})
\begin{eqnarray*}
0=\mathfrak{L}_{X}\langle X,X\rangle=2\rho\langle X,X\rangle,
\end{eqnarray*}
which implies that $\rho=0$. That is, $X$ is trivial, i.e., a Killing vector field. Hence, we have that $X$ is a lightlike vector field.
\end{proof}

\section{Proof of Theorems \ref{theorem2} and \ref{prop1}}\label{sectionunimodular}
Let $G$ be a Lie group with the Lie algebra $\mathfrak g$ and let $\langle\cdot,\cdot\rangle$ be a left-invariant pseudo-Riemannian metric of signature $(p,q)$ on $G$. First of all, we have
\begin{eqnarray}\label{eq3}
&& \langle\{ad_{X}+ ad_{X}^{*}\}\{Y\},Z\rangle \nonumber\\
&=&\langle ad_{X}\{Y\},Z\rangle+\langle ad_{X}^{*}\{Y\}, Z\rangle = \langle ad_{X}\{Y\},Z\rangle+\langle Y, ad_{X}\{Z\}\rangle\nonumber\\
&=& \langle [X,Y], Z\rangle + \langle Y, [X,Z]\rangle,
\end{eqnarray}
where $\langle ad_{X}\{Y\}, Z\rangle = \langle Y,ad^{*}_{X}\{Z\}\rangle$ for all $X,Y,Z\in\mathfrak{g}$.
Hence, from (\ref{liederivative}) and (\ref{eq3})  we have
\begin{eqnarray*}\label{adjointrepre}
ad_{X}+ ad_{X}^{*}= -\mathfrak{L}_{X} \langle\cdot,\cdot\rangle.
\end{eqnarray*}
If $X\in \mathfrak g$ is a conformal vector field, by (\ref{conformalvectorfield}), then
\begin{eqnarray}\label{algebricconformalvectorfield}
ad_{X}+ad^{*}_{X}=-2\rho \langle\cdot,\cdot\rangle.
\end{eqnarray}

{\it Proof of Theorem \ref{theorem2}.}
Let $\{e_1,\cdots,e_s,e_{s+1},\cdots,e_{n}\}$ be an orthogonal basis of $\mathfrak{g}$ corresponding to the pseudo-Riemannian metric $\langle\cdot,\cdot\rangle$ of signature $(p,q)$. That is,
\begin{eqnarray*}
\langle e_{i},e_{j}\rangle=\left\{
\begin{array}{ll}
0,& \quad\mbox{if}\quad i\neq j,\\
1,& \quad\mbox{if}\quad 1\leq i,j \leq p,\\
-1,& \quad\mbox{if}\quad p+1\leq i,j\leq n=p+q.\\
\end{array}
\right.
\end{eqnarray*}
From (\ref{algebricconformalvectorfield}) we have that
\begin{eqnarray*}
-2\rho\langle e_{i},e_{i}\rangle = \langle\{ad_{X} + ad^{*}_{X}\}\{e_{i}\},e_{i}\rangle = 2\langle ad_{X}\{e_{i}\},e_{i}\rangle
\end{eqnarray*}
and
\begin{eqnarray*}
-2\rho\langle e_{i},-e_{i}\rangle = \langle\{ad_{X} + ad_{X}^{*}\}\{e_{i}\},-e_{i}\rangle = 2\langle ad_{X}\{e_{i}\},-e_{i}\rangle.
\end{eqnarray*}
Therefore,
\begin{eqnarray*}
-n\rho&=&-\rho\left(\sum_{i=1}^p \langle e_{i},e_{i}\rangle+\sum_{i=p+1}^n \langle e_{i},-e_{i}\rangle\right)\nonumber\\
&=& \sum_{i=1}^p \langle ad_{X}\{e_{i}\},e_{i}\rangle+\sum_{i=p+1}^n \langle ad_{X}\{e_{i}\},-e_{i}\rangle \\
&=& tr\{ad_{X}\}=0.
\end{eqnarray*}
Then $\rho=0$. That is, X is a Killing vector field.
$\hfill\Box$

{\it Proof of Theorem \ref{prop1}.}
For the first part, assume that $\dim C(\mathfrak g)=s > \min(p,q)$, and assume that $\{e_1,\cdots,e_s\}$ is a basis of $C(\mathfrak g)$. For any $1\leq i,j\leq \min(p,q)$,
$$\mathfrak{L}_{X}\langle e_i,e_j\rangle=-\langle [X,e_i], e_j\rangle-\langle e_i, [X, e_j]\rangle=0=2\rho \langle e_i,e_j\rangle.$$
Since $\rho\not=0$, we have $$\langle e_i,e_j\rangle =0,\quad \forall 1\leq i,j\leq s.$$
That is, we have an $s$-dimensional isotropy subspace of $\langle\cdot,\cdot\rangle$. But for a pseudo-Riemannian metric of signature $(p,q)$, we only can have a $\min(p,q)$-dimensional isotropy subspace. So we must have $\dim C(\mathfrak g)\leq \min(p,q)$.

For the second part, assume that $\dim [\mathfrak g,\mathfrak g]< \dim \mathfrak g-\min(p,q)$. It follows that $\dim[\mathfrak g,\mathfrak g]^\bot=t>\min(p,q)$, where $[\mathfrak g,\mathfrak g]^\bot$ is the orthogonal complement of $[\mathfrak g,\mathfrak g]$ corresponding to $\langle\cdot,\cdot\rangle$. Assume that $\{e_1,\cdots,e_t\}$ is a basis of $[\mathfrak g,\mathfrak g]^\bot$. For any $1\leq i,j\leq t$,
$$\mathfrak{L}_{X}\langle e_i,e_j\rangle=-\langle [X,e_i], e_j\rangle-\langle e_i, [X, e_j]\rangle=0=2\rho \langle e_i,e_j\rangle.$$
Since $\rho\not=0$, we have $$\langle e_i,e_j\rangle=0,\quad \forall 1\leq i,j\leq t.$$
That is, we have a $t$-dimensional isotropy subspace of $\langle\cdot,\cdot\rangle$. Similar to the first part, we have $\dim [\mathfrak g,\mathfrak g]\geq \dim \mathfrak g-\min(p,q)$.
$\hfill\Box$

\section{Non-Killing conformal vector fields on non-unimodular Lie groups with left-invariant Lorentzian metrics}\label{nonunimodular}

Let $G$ be a non-unimodular Lie group with the Lie algebra $\mathfrak{g}$ and let $\langle\cdot,\cdot\rangle$ be a Lorentzian metric on $G$. Assume that $\mathfrak g$ admits a non-Killing conformal vector field $X$. By Theorem~\ref{prop1}, $$\dim [\mathfrak g,\mathfrak g]\geq \dim \mathfrak g-1.$$
Furthermore, if $\mathfrak g$ is solvable, we know that $\dim [\mathfrak g,\mathfrak g]<\dim \mathfrak g$. It follows that $$\dim [\mathfrak g,\mathfrak g]=\dim \mathfrak g-1.$$
\begin{lemma}\label{lemma2}
Let notations be as above. Then the restriction of $\langle\cdot,\cdot\rangle$ on $[\mathfrak g,\mathfrak g]$ is degenerate.
\end{lemma}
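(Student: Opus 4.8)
The plan is to argue by contradiction, exploiting the fact recorded in the discussion just before the lemma that here $\dim[\mathfrak g,\mathfrak g]=\dim\mathfrak g-1$. Writing $n=\dim\mathfrak g$, this forces the orthogonal complement $[\mathfrak g,\mathfrak g]^\bot$ to be exactly one-dimensional, since $\dim[\mathfrak g,\mathfrak g]+\dim[\mathfrak g,\mathfrak g]^\bot=n$ for any subspace of a nondegenerate inner-product space. Because a Lorentzian metric has $\min(p,q)=1$, this is precisely the borderline case $t=\min(p,q)$ of the second part of Theorem~\ref{prop1}: the isotropy count no longer yields a contradiction, but instead pins down the geometry of $[\mathfrak g,\mathfrak g]^\bot$.

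First I would show that any spanning vector $e$ of $[\mathfrak g,\mathfrak g]^\bot$ is lightlike. Applying the conformal identity (\ref{conformalvectorfield}) to the pair $(e,e)$ and expanding the Lie derivative via (\ref{liederivative}) gives $2\rho\langle e,e\rangle=-2\langle[X,e],e\rangle$. Since $[X,e]$ is a bracket it lies in $[\mathfrak g,\mathfrak g]$, while $e\in[\mathfrak g,\mathfrak g]^\bot$, so $\langle[X,e],e\rangle=0$. As $X$ is non-Killing we have $\rho\neq0$, and therefore $\langle e,e\rangle=0$; that is, $[\mathfrak g,\mathfrak g]^\bot=\mathbb R e$ is a null line.

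The final step deduces degeneracy of the restriction. Suppose, for contradiction, that $\langle\cdot,\cdot\rangle|_{[\mathfrak g,\mathfrak g]}$ is nondegenerate. Then $\mathfrak g$ splits as the orthogonal direct sum $[\mathfrak g,\mathfrak g]\oplus[\mathfrak g,\mathfrak g]^\bot$, and the restriction of the metric to $[\mathfrak g,\mathfrak g]^\bot$ must itself be nondegenerate. But $[\mathfrak g,\mathfrak g]^\bot=\mathbb R e$ with $\langle e,e\rangle=0$, so that restriction vanishes identically, a contradiction. Equivalently, one checks directly that $e\in[\mathfrak g,\mathfrak g]$ — otherwise $\mathfrak g=[\mathfrak g,\mathfrak g]\oplus\mathbb R e$ and $e$, being orthogonal to $[\mathfrak g,\mathfrak g]$ and to itself, would be orthogonal to all of $\mathfrak g$, contradicting nondegeneracy of $\langle\cdot,\cdot\rangle$ — so that $e\in[\mathfrak g,\mathfrak g]\cap[\mathfrak g,\mathfrak g]^\bot$ is a nonzero radical vector of the restriction, which is therefore degenerate.

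I expect the main obstacle to be conceptual rather than computational: recognizing that Lemma~\ref{lemma2} is exactly the equality case of Theorem~\ref{prop1}, in which the dimension count stops one step short of a contradiction and instead forces $[\mathfrak g,\mathfrak g]^\bot$ to be a null line, and then seeing that this null line is precisely the radical of the restriction. The only points needing care are the appeal to $\rho\neq0$ (valid since $X$ is non-Killing) and the inclusion $[X,e]\in[\mathfrak g,\mathfrak g]$, both of which are immediate from the definitions.
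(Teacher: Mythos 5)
Your proposal is correct and follows essentially the same route as the paper: assume nondegeneracy, take a spanning vector $e$ of the one-dimensional $[\mathfrak g,\mathfrak g]^\bot$, and use $\mathfrak{L}_{X}\langle e,e\rangle=-2\langle [X,e],e\rangle=0$ together with $\rho\neq 0$ to force $\langle e,e\rangle=0$, contradicting the nondegeneracy of the restriction to $[\mathfrak g,\mathfrak g]^\bot$. Your additional remarks (the explicit observation that $[X,e]\in[\mathfrak g,\mathfrak g]$ and the reformulation via a radical vector) only make explicit what the paper leaves implicit.
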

\begin{proof}
Assume that the restriction of $\langle\cdot,\cdot\rangle$ on $[\mathfrak g,\mathfrak g]$ is non-degenerate. Then $\dim [\mathfrak g,\mathfrak g]^\bot=1$ and the restriction of $\langle\cdot,\cdot\rangle$ on $[\mathfrak g,\mathfrak g]^\bot$ is non-degenerate. Let $\{e\}$ be a basis of $[\mathfrak g,\mathfrak g]^\bot$. Thus $\langle e,e\rangle \not=0$. By $$\mathfrak{L}_{X}\langle e,e\rangle=-\langle [X,e], e\rangle-\langle e, [X, e]\rangle=0=2\rho \langle e,e\rangle$$ and $\rho\not=0$, we have $\langle e,e\rangle=0$. It is a contradiction. Thus the restriction of $\langle\cdot,\cdot\rangle$ on $[\mathfrak g,\mathfrak g]$ is degenerate.
\end{proof}

By Lemma~\ref{lemma2}, we will classify solvable Lie groups with Lorentzian metrics admitting non-Killing conformal vector fields in dimensions 2 and 3.

\begin{theorem}
Let $G$ be a 2-dimensional Lie group with the Lie algebra $\mathfrak g$ and let $\langle\cdot,\cdot\rangle$ be a Lorentzian metric on $G$. Assume that $\mathfrak g$ admits a non-Killing conformal vector field $X$. Then there is a basis $\{e_{1},e_{2}\}$ of $\mathfrak g$ such that the non-zero bracket is given by \begin{eqnarray*}
[e_{1},e_{2}]=e_{2}
\end{eqnarray*}
and the Lorentzian metric associated with the basis is defined by
\begin{eqnarray*}
\langle\cdot,\cdot\rangle=\left(
\begin{array}{cc}
0 & 1\\
1 & 0\\
\end{array}
\right).
\end{eqnarray*}
In particular, $X=x_{1}e_{1}$.
\end{theorem}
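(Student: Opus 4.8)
The plan is to first pin down the Lie algebra structure, then normalize the metric in a way that is compatible with the bracket, and finally read off the shape of $X$ from the conformal equation. Since every $2$-dimensional Lie algebra is solvable, the discussion opening this section applies and gives $\dim[\mathfrak g,\mathfrak g]=\dim\mathfrak g-1=1$; in particular $\mathfrak g$ is non-abelian. I would let $e_2$ span the one-dimensional commutator ideal $[\mathfrak g,\mathfrak g]$ and choose any $e_1$ completing it to a basis. Because $\mathrm{ad}_{e_1}$ carries $\mathfrak g$ into $[\mathfrak g,\mathfrak g]=\mathbb{R}e_2$, one has $[e_1,e_2]=\lambda e_2$ with $\lambda\neq 0$ (otherwise $\mathfrak g$ would be abelian), and replacing $e_1$ by $\lambda^{-1}e_1$ normalizes this to $[e_1,e_2]=e_2$, which is then the only nonzero bracket.

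Next I would normalize the metric without disturbing this bracket. By Lemma~\ref{lemma2} the restriction of $\langle\cdot,\cdot\rangle$ to $[\mathfrak g,\mathfrak g]=\mathbb{R}e_2$ is degenerate, so $\langle e_2,e_2\rangle=0$; non-degeneracy of $\langle\cdot,\cdot\rangle$ then forces $\langle e_1,e_2\rangle\neq 0$, since otherwise $e_2$ would lie in the radical. The \emph{key observation} is that the only residual basis changes I may use are $e_1\mapsto e_1+te_2$ and $e_2\mapsto se_2$, and both of these preserve $[e_1,e_2]=e_2$. Using the shift I choose $t$ so that $\langle e_1,e_1\rangle+2t\langle e_1,e_2\rangle=0$, making $e_1$ lightlike, and using the rescaling I choose $s$ so that $\langle e_1,se_2\rangle=1$; this produces exactly the Gram matrix $\left(\begin{smallmatrix}0&1\\1&0\end{smallmatrix}\right)$. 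The point I must watch here is precisely this compatibility: a rescaling of $e_1$ would alter the bracket coefficient and is therefore forbidden, so I have exactly two free parameters $s,t$ to meet the two metric conditions. This bookkeeping is the only place where real care is required.

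Finally I would determine $X$. Writing $X=x_1e_1+x_2e_2$, I compute $[X,e_1]=x_2[e_2,e_1]=-x_2e_2$, and using the left-invariant form $(\mathfrak{L}_X\langle\cdot,\cdot\rangle)(Y,Z)=-\langle[X,Y],Z\rangle-\langle Y,[X,Z]\rangle$ of (\ref{liederivative}) I get $(\mathfrak{L}_X\langle\cdot,\cdot\rangle)(e_1,e_1)=-2\langle[X,e_1],e_1\rangle=2x_2\langle e_2,e_1\rangle=2x_2$. The conformal condition (\ref{conformalvectorfield}) demands that this equal $2\rho\langle e_1,e_1\rangle=0$, which forces $x_2=0$ and hence $X=x_1e_1$. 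As a consistency check, evaluating the same equation on the pair $(e_1,e_2)$ yields $-x_1=2\rho$, so $\rho=-x_1/2$, which is nonzero exactly when $X$ is non-Killing. I expect no obstacle beyond the normalization bookkeeping in the middle step.
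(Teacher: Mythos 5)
Your proposal is correct and follows essentially the same route as the paper: solvability gives $\dim[\mathfrak g,\mathfrak g]=1$, Lemma~\ref{lemma2} forces $\langle e_2,e_2\rangle=0$, a shift $e_1\mapsto e_1+te_2$ and a rescaling of $e_2$ (both of which preserve $[e_1,e_2]=e_2$) normalize the Gram matrix, and the conformal equation then yields $x_2=0$ and $\rho=-x_1/2$. Your explicit bookkeeping of which basis changes preserve the bracket is slightly more careful than the paper's, but the argument is the same.
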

\begin{proof}
First of all, $\mathfrak g$ is solvable if $\dim \mathfrak g=2$. Since $\mathfrak g$ admits a non-Killing conformal vector field $X$, we have $\dim [\mathfrak g,\mathfrak g]=1$. Thus there is a basis $\{e_{1},e_{2}\}$ of $\mathfrak g$ such that
\begin{eqnarray*}
[e_{1},e_{2}]=e_{2},
\end{eqnarray*}
where $\{e_2\}$ is a basis of $[\mathfrak g,\mathfrak g]$. By Lemma~\ref{lemma2}, we can assume that the Lorentzian metric $\langle\cdot,\cdot\rangle$ on $\mathfrak{g}$ associated with the basis is defined by
\begin{eqnarray*}\label{matrix000}
\langle\cdot,\cdot\rangle=\left(
\begin{array}{cc}
a & 1\\
1 & 0\\
\end{array}
\right).
\end{eqnarray*}
Replacing $e_1$ by $e_1-\frac{a}{2}e_2$ if necessary, we can assume that $a=0$. Assume that $X=x_{1}e_{1}+x_{2}e_{2}$ is a left-invariant conformal vector field on $\mathfrak{g}$. By
\begin{eqnarray}\label{matrix10000}
0=\mathfrak{L}_{X}\langle\cdot,\cdot\rangle-2\rho\langle\cdot,\cdot\rangle=\left(
\begin{array}{cc}
2x_{2} & -x_{1}-2\rho\\
-x_{1}-2\rho & 0\\
\end{array}
\right),
\end{eqnarray}
we have $x_{2}=0$. Therefore, $X=x_{1}e_{1}$ is a non-Killing left-invariant lightlike conformal vector field with the conformal function given by $\rho=\frac{-x_{1}}{2}$.
\end{proof}
\begin{remark}
We know that $2$-dimensional Yamabe solitons and Ricci solitons are the same. A straightforward computation shows that the curvature $R=0$. Then, from (\ref{yamabesoliton}) the soliton will be non-trivial if and only if it is a shrinking or expanding soliton.
\end{remark}

\begin{theorem}
Let $G$ be a 3-dimensional Lie group with the Lie algebra $\mathfrak g$ and let $\langle\cdot,\cdot\rangle$ be a Lorentzian metric on $G$. Assume that $\mathfrak g$ admits a non-Killing conformal vector field $X$. Then there is a basis $\{e_{1}$, $e_{2}$, $e_{3}\}$ of $\mathfrak g$ such that the non-zero brackets are given by
\begin{eqnarray}
[e_{1}, e_{3}]=\alpha e_{1}+\beta e_{2};\quad [e_{2}, e_{3}]=2\alpha e_{2} \quad\mbox{and}\quad
[e_{1}, e_{2}]=0,
\end{eqnarray}
where $\alpha\not=0$, and the Lorentzian metric associated with the basis is defined by
\begin{eqnarray*}
\langle\cdot,\cdot\rangle=\left(
\begin{array}{ccc}
1 & 0 & 0 \\
0 & 0 & -1\\
0 & -1 & 0\\
\end{array}
\right).
\end{eqnarray*}
In particular,
\begin{eqnarray}\label{campoluz}
X=\left\{
\begin{array}{lcc}
x_{3}e_{3}\quad(\mbox{with}\quad\rho=x_{3}\alpha),\quad\mbox{for}\quad\beta=0;\\
x_{1}(e_{1}-\frac{\beta}{\delta}e_{2}-\frac{\delta}{2\beta}e_{3})\quad(\mbox{with}\quad\rho=-x_{1}\frac{\delta^{2}}{2\beta}),\quad\mbox{for}\quad\beta\neq0.
\end{array}
\right.
\end{eqnarray}
\end{theorem}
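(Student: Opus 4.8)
The plan is to run the same scheme as in the two-dimensional case, but with the Lie-algebraic structure pinned down first. Since $\mathfrak g$ is three-dimensional and non-unimodular it must be solvable, because the only non-solvable three-dimensional real Lie algebras, $\mathfrak{sl}(2,\mathbb R)$ and $\mathfrak{su}(2)$, are unimodular; hence $\dim[\mathfrak g,\mathfrak g]<3$, while Theorem~\ref{prop1} gives $\dim[\mathfrak g,\mathfrak g]\geq \dim\mathfrak g-\min(p,q)=2$, so $\dim[\mathfrak g,\mathfrak g]=2$. By Lemma~\ref{lemma2} the restriction of $\langle\cdot,\cdot\rangle$ to the ideal $[\mathfrak g,\mathfrak g]$ is degenerate, and by the lemma on lightlike conformal fields $X$ is lightlike with $\rho\neq 0$. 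In signature $(2,1)$ a degenerate two-plane has a one-dimensional lightlike radical, so I would let $e_2$ span the radical of $\langle\cdot,\cdot\rangle|_{[\mathfrak g,\mathfrak g]}$, pick $e_1\in[\mathfrak g,\mathfrak g]$ with $\langle e_1,e_1\rangle=1$, $\langle e_1,e_2\rangle=0$ (the quotient of $[\mathfrak g,\mathfrak g]$ by its radical being positive definite), and complete with a lightlike $e_3\in e_1^{\perp}$ normalized by $\langle e_2,e_3\rangle=-1$. This yields exactly the displayed Gram matrix.

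The key structural observation is that $[\mathfrak g,\mathfrak g]$ is abelian. A two-dimensional Lie algebra is either abelian or isomorphic to the non-abelian $\mathfrak{aff}(1)$, all of whose derivations are inner; if $[\mathfrak g,\mathfrak g]\cong\mathfrak{aff}(1)$, then $ad_{e_3}|_{[\mathfrak g,\mathfrak g]}$, being a derivation of the ideal, would be inner, which forces $[\mathfrak g,\mathfrak g]=[e_3,[\mathfrak g,\mathfrak g]]+[[\mathfrak g,\mathfrak g],[\mathfrak g,\mathfrak g]]$ to be one-dimensional, a contradiction. Hence $[e_1,e_2]=0$, and since $[\mathfrak g,\mathfrak g]=\mathrm{span}\{e_1,e_2\}$ is an ideal the remaining brackets read
\begin{eqnarray*}
[e_1,e_3]=r e_1+s e_2,\qquad [e_2,e_3]=u e_1+v e_2,
\end{eqnarray*}
with $rv-su\neq 0$ (otherwise $ad_{e_3}|_{[\mathfrak g,\mathfrak g]}$ is singular and $[\mathfrak g,\mathfrak g]$ fails to be two-dimensional).

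Next I would impose (\ref{conformalvectorfield}). Writing $X=x_1e_1+x_2e_2+x_3e_3$ and expanding $\mathfrak L_X\langle e_i,e_j\rangle=2\rho\langle e_i,e_j\rangle$ through (\ref{liederivative}) (equivalently $ad_X+ad_X^{*}=-2\rho\langle\cdot,\cdot\rangle$ from (\ref{algebricconformalvectorfield})) over all pairs, the two null-diagonal entries are automatic, while the $(e_1,e_1)$ entry gives $\rho=x_3 r$, the $(e_1,e_2)$ entry gives $x_3 u=0$, and the $(e_2,e_3)$ entry gives $x_3 v=2\rho$. Since $\rho\neq 0$ these force $x_3\neq 0$, $r\neq 0$, $u=0$ and $v=2r$; setting $\alpha=r\neq 0$ and $\beta=s$ reproduces the stated brackets $[e_1,e_3]=\alpha e_1+\beta e_2$ and $[e_2,e_3]=2\alpha e_2$, the non-vanishing of $\alpha$ being precisely non-unimodularity (if $\alpha=0$ every adjoint trace vanishes and Theorem~\ref{theorem2} would make $X$ Killing).

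Finally I would read off $X$ from the two surviving equations, the $(e_3,e_3)$ entry $x_1\beta+2\alpha x_2=0$ and the $(e_1,e_3)$ entry $x_1\alpha+x_3\beta=0$. For $\beta=0$ they force $x_1=x_2=0$, giving $X=x_3e_3$ with $\rho=x_3\alpha$; for $\beta\neq 0$ they express $x_1,x_2$ through $x_3$, producing the displayed multiple of $e_1-\frac{\beta}{\delta}e_2-\frac{\delta}{2\beta}e_3$ with $\delta=2\alpha$, and substituting back confirms $\langle X,X\rangle=x_1^2-2x_2x_3=0$, as it must since $X$ is lightlike. I expect the main obstacle to be the structural step that $[\mathfrak g,\mathfrak g]$ is abelian: without it one faces the full six-parameter bracket coupled to the Jacobi identity and a nonlinear system, whereas the derivation argument reduces everything to the short linear system above. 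The only other delicate point is verifying that the signature constraint in the normalization forces $e_1$ to be spacelike and hence produces the exact Gram matrix displayed.
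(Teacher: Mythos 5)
Your proposal is correct and follows essentially the same route as the paper's proof: Theorem~\ref{prop1} and Lemma~\ref{lemma2} force $\dim[\mathfrak g,\mathfrak g]=2$ with degenerate restricted metric, the basis is normalized to the same null-adapted Gram matrix, and the conformal equation is then solved entry by entry, yielding the same constraints $\gamma=0$, $\delta=2\alpha$ and the same two families for $X$. The only divergence is your argument that $[\mathfrak g,\mathfrak g]$ is abelian via the fact that all derivations of the non-abelian two-dimensional algebra are inner, where the paper simply invokes that the derived algebra of a solvable Lie algebra is nilpotent, hence abelian in dimension two; both are valid.
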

\begin{proof}
First we know that $\mathfrak g$ is non-unimodular since $\mathfrak g$ admits a non-Killing conformal vector field $X$. Then $\mathfrak g$ is solvable by $\dim \mathfrak g=3$. Thus we know dim$[\mathfrak g,\mathfrak g]=2$. Since $\mathfrak g$ is solvable, we know $[\mathfrak g,\mathfrak g]$ is nilpotent. Thus $[\mathfrak g,\mathfrak g]$ is abelian. By Lemma~\ref{lemma2}, we know that there exist a basis $\{e_{1}$, $e_{2}$, $e_{3}\}$ such that the Lorentzian metric associated with the basis is given by \begin{eqnarray*}\label{matrix}
\langle\cdot,\cdot\rangle=\left(
\begin{array}{ccc}
1 & 0 & 0 \\
0 & 0 & -1\\
0 & -1 & 0\\
\end{array}
\right).
\end{eqnarray*}
Here the non-zero brackets must be
\begin{eqnarray*}
[e_{1}, e_{3}]=\alpha e_{1}+\beta e_{2}, \quad [e_{2}, e_{3}]=\gamma e_{1}+\delta e_{2},\quad [e_{1}, e_{2}]=0.
\end{eqnarray*}
Since $\mathfrak g$ is non-unimodular, we have $\alpha+\delta\neq0$ (see \cite{Calvino, Cordero, Milnor}). Assume that $X=\displaystyle\sum_{i=1}^{3}x_{i}e_{i}$ is a left-invariant conformal vector field. Therefore,
\begin{eqnarray*}\label{matrix1}
0=\mathfrak{L}_{X}\langle\cdot,\cdot\rangle-2\rho\langle\cdot,\cdot\rangle=\left(
\begin{array}{ccc}
2(x_{3}\alpha-\rho) & x_3\gamma & -x_{1}\alpha-x_{3}\beta \\
x_3\gamma & 0 & -x_{3}\delta+2\rho \\
-x_{1}\alpha - x_{3}\beta & -x_{3}\delta+2\rho & 2(x_{1}\beta+x_{2}\delta)\\
\end{array}
\right).
\end{eqnarray*}
If $x_{3}=0$ or $\delta=0$, then $\rho=0$, i.e., $X$ is Killing. Since $X$ is non-Killing, we have $x_3\delta\not=0$. It implies that $\gamma=0$ and $\alpha=\frac{\delta}{2}\neq0$. Furthermore,
\begin{eqnarray*}
X=\left\{
\begin{array}{lcc}
x_{3}e_{3}\quad(\mbox{with}\quad\rho=x_{3}\alpha),\quad\mbox{for}\quad\beta=0;\\
x_{1}(e_{1}-\frac{\beta}{\delta}e_{2}-\frac{\delta}{2\beta}e_{3})\quad(\mbox{with}\quad\rho=-x_{1}\frac{\delta^{2}}{2\beta}),\quad\mbox{for}\quad\beta\neq0.
\end{array}
\right.
\end{eqnarray*}
It ends the theorem.
\end{proof}

\begin{remark}
Clearly $X$ defined by (\ref{campoluz}) is lightlike. By a straightforward computation we have that the scalar curvature $R=0$. This implies that the left-invariant Lorentzian Yamabe soliton with $X$ given by (\ref{campoluz}) is non-trivial if and only if it is shrinking or expanding. In fact, the classification of 3-dimensional Lorentzian Lie groups admitting non-Killing conformal vector fields is given in Theorem 7 of \cite{Calvino}. Here we give a simpler proof by our method.
\end{remark}

The following is to give some examples of non-Killing conformal vector fields and Yamabe solitons on $G$ of higher dimensions.

\begin{example}\label{example1}
Let $G$ be a connected $4$-dimensional non-unimodular with the Lie algebra $\mathfrak{g}$. Let $\{e_{1}$, $e_{2}$, $e_{3}, e_{4}\}$ be a basis of $\mathfrak g$ such that the brackets are defined by
\begin{eqnarray*}\label{frameorigem}
&&[e_{1}, e_{2}]=\alpha e_{3};\quad [e_{1}, e_{3}]=[e_{2}, e_{3}]=0;\quad[e_{1}, e_{4}]
=-\frac{1}{2}e_{1};\nonumber\\
&&[e_{2}, e_{4}]=-\frac{1}{2}e_{2}\quad\mbox{and}\quad[e_{3}, e_{4}]=-e_{3},
\end{eqnarray*}
where $\alpha\in\mathbb{R}$. Consider the Lorentzian metric on $\mathfrak{g}$ associated with the basis given by
\begin{eqnarray*}\label{matrix222}
\langle\cdot,\cdot\rangle=\left(
\begin{array}{cccc}
1 & 0 & 0 & 0 \\
0 & 1 & 0 & 0 \\
0 & 0 & 0 & -1 \\
0 & 0 & -1 & 0 \\
\end{array}
\right).
\end{eqnarray*}
Assume that $X=\displaystyle\sum_{i=1}^{4}x_{i}e_{i}$ is a left-invariant conformal vector field. Then,
\begin{eqnarray*}
0=\mathfrak{L}_{X}\langle\cdot,\cdot\rangle-2\rho\langle\cdot,\cdot\rangle=\left(
\begin{array}{cccc}
-x_{4}-2\rho & 0 & -2x_{3} & -\alpha x_{2}+\frac{x_{1}}{2} \\
0 & -x_{4}-2\rho & 0 & \alpha x_{1}+\frac{x_{2}}{2} \\
-2x_{3} & 0 & 0 & x_{4}+2\rho \\
-\alpha x_{2}+\frac{x_{1}}{2} & \alpha x_{1}+\frac{x_{2}}{2}  & x_{4}+2\rho & -2x_{3} \\
\end{array}
\right).
\end{eqnarray*}
Clearly, $x_{4}=0$ if, and only if, $\rho=0$, i.e., $X$ is a Killing vector field. 
In the following, assume that $x_{4}\neq0$. By $-\alpha x_{2}+\frac{x_{1}}{2}=0$ and $\alpha x_{1}+\frac{x_{2}}{2}=0$, we have $x_1=x_2=0$. Then,
\begin{eqnarray*}\label{campoluz2}
X=x_{4}e_{4},\quad\mbox{with}\quad\rho=\frac{-x_{4}}{2}.
\end{eqnarray*}
Clearly, the scalar curvature is given by
\begin{eqnarray*}
R=\frac{\alpha(1-\alpha)}{2}.
\end{eqnarray*}
Then the Damek-Ricci Lorentzian Yamabe soliton is trivial if, and only if, $\lambda=R=\frac{\alpha(1-\alpha)}{2}$. For this case, if $\alpha\not=0$ and $\alpha\not=1$, the Damek-Ricci Lorentzian Yamabe soliton can be shrinking, steady or expanding.
\end{example}

\begin{example}\label{example}
Let $\mathfrak g$ be a solvable Lie algebra of dimension $n$ such that $\dim [\mathfrak g,\mathfrak g]=n-1$ and $[\mathfrak g,\mathfrak g]$ is abelian. Assume that $\{e_1,\cdots,e_{n}\}$ is a basis of $[\mathfrak g,\mathfrak g]$, where $\{e_1,\cdots,e_{n-1}\}$ is a basis of $[\mathfrak g,\mathfrak g]$. The non-zero brackets are defined by $$[e_n,e_i]=\lambda_i e_i, \quad \text{ for any } 1\leq i\leq n-1,$$ where $\lambda_i\not=0$ for any $1\leq i\leq n-1$. Then $\mathfrak g$ is non-unimodular if and only if $\sum_{i=1}^{n-1}\lambda_i\not=0$. Consider the following Lorentzian metric on $\mathfrak{g}$ associated with the basis given by
\begin{eqnarray*}\label{matrixn}
\langle\cdot,\cdot\rangle=\left(
\begin{array}{ccccc}
1 & 0 & \cdots & 0 & 0 \\
0 & 1 & \cdots & 0 & 0 \\
\vdots & \vdots & \ddots & \vdots & \vdots \\
0 & 0 & \cdots & 0 & 1 \\
0 & 0 & \cdots & 1 & 0 \\
\end{array}
\right).
\end{eqnarray*}
Assume that $X=\displaystyle\sum_{i=1}^{n}x_{i}e_{i}$ is a left-invariant conformal vector field. For any $1\leq i\leq n-2$, $$0=\mathfrak{L}_{X}\langle e_i,e_n\rangle-2\rho\langle e_i,e_n\rangle=\lambda_ix_i.$$
That is, $x_i=0$ for any $1\leq i\leq n-2$. By $$0=\mathfrak{L}_{X}\langle e_{n},e_n\rangle-2\rho\langle e_{n},e_n\rangle=\lambda_{n-1}x_{n-1},$$ we have $x_{n-1}=0$. Thus $X=x_ne_n$. By $$0=\mathfrak{L}_{X}\langle e_{n-1},e_n\rangle-2\rho\langle e_{n-1},e_n\rangle=-\lambda_{n-1}x_{n}-2\rho,$$ we have $\rho=-\frac{\lambda_{n-1}x_n}{2}$.
For any $1\leq i\leq n-2$, by $$0=\mathfrak{L}_{X}\langle e_i,e_i\rangle-2\rho\langle e_i,e_i\rangle,$$ we know $\lambda_i=\frac{1}{2}\lambda_{n-1}$ if $x_n\not=0$.
Moreover, it is easy to check $X=x_ne_n$ is a non-Killing conformal vector field if $\lambda_i=\frac{1}{2}\lambda_{n-1}$ for any $1\leq i\leq n-2$.
\end{example}

\begin{example}
In fact, in Example~\ref{example}, we can delete that $[\mathfrak g,\mathfrak g]$ is abelian. That is,
\begin{enumerate}
   \item $\mathfrak g$ is a solvable Lie algebra of dimension $n$.
   \item $\dim [\mathfrak g,\mathfrak g]=n-1$.
   \item Let $\{e_1,\cdots,e_{n}\}$ be a basis of $[\mathfrak g,\mathfrak g]$, where $\{e_1,\cdots,e_{n-1}\}$ is a basis of $[\mathfrak g,\mathfrak g]$. The non-zero brackets are defined by $[e_n,e_i]=\lambda_i e_i$ where $\lambda_i\not=0$ for any $1\leq i\leq n-1$ and $\sum_{i=1}^{n-1}\lambda_i\not=0$.
\end{enumerate}
Consider the following Lorentzian metric on $\mathfrak{g}$ associated with the basis given by
\begin{eqnarray*}
\langle\cdot,\cdot\rangle=\left(
\begin{array}{ccccc}
1 & 0 & \cdots & 0 & 0 \\
0 & 1 & \cdots & 0 & 0 \\
\vdots & \vdots & \ddots & \vdots & \vdots \\
0 & 0 & \cdots & 0 & 1 \\
0 & 0 & \cdots & 1 & 0 \\
\end{array}
\right).
\end{eqnarray*}
Similar to the discussion in Example~\ref{example}, if $X$ is a left-invariant non-Killing conformal vector field, then $X=ae_n$ for some $a\not=0$. Moreover,
$\lambda_i=\frac{1}{2}\lambda_{n-1}$ for any $1\leq i\leq n-2$. It is easy to check that $X=ae_n$ is a non-Killing conformal vector field if $\lambda_i=\frac{1}{2}\lambda_{n-1}$ for any $1\leq i\leq n-2$. For this case, we have  $$[e_n,[e_i,e_j]]=[[e_n,e_i],e_j]+[e_i,[e_n,e_j]]=\lambda_i[e_i,e_j]+\lambda_j[e_i,e_j]=(\lambda_i+\lambda_j)[e_i,e_j]$$ for any $1\leq i,j\leq n-1$. That is,
$$ [e_i,e_j]=\beta_{ij}e_{n-1}, \text{ if } 1\leq i\not=j\leq n-2;\quad [e_i,e_{n-1}]=0, \text{ if } 1\leq i\leq n-2.
$$
This example can be considered as the higher dimensional case of the example~\ref{example1}.
\end{example}

\begin{acknowledgement}
The third author is grateful to Jos\'e Nazareno Gomes for bringing several references to his attention. 
\end{acknowledgement}

\end{document}